\newcommand*{\mailto}[1]{\href{mailto:#1}{\nolinkurl{#1}}}
\newtheorem{theorem}{Theorem}[section]
\newtheorem{lemma}[theorem]{Lemma}
\theoremstyle{definition}
\newtheorem{remark}[theorem]{Remark}
\newtheorem{example}[theorem]{Example}
\newcommand{\R}{{\mathbb R}}
\newcommand{\C}{{\mathbb C}}
\newcommand{\nn}{\nonumber}
\newcommand{\be}{\begin{equation}}
\newcommand{\ee}{\end{equation}} 
\newcommand{\id}{{\rm 1\hspace{-0.6ex}l}}
\newcommand{\E}{\mathrm{e}}
\newcommand{\im}{\mathrm{Im}}
\newcommand{\re}{\mathrm{Re}}
\newcommand{\dom}[1]{\mathrm{dom}\left(#1\right)}
\newcommand{\OO}{\mathcal{O}}
\newcommand{\oo}{o}
\newcommand{\ti}{\tilde}
\newcommand{\Tmax}{T_{\mathrm{max}}}
\newcommand{\Deftau}{\mathfrak{D}_\tau}
\newcommand{\ACloc}{AC_{\mathrm{loc}}[a,b)}
\newcommand{\eps}{\varepsilon}
\numberwithin{equation}{section}
\begin{document}

\title[Asymptotics of the Weyl Function]{Asymptotics of the Weyl Function for Schr\"odinger Operators 
with Measure-Valued Potentials}

\author[A.\ Luger]{Annemarie Luger}
\address{Department of Mathematics\\ Stockholm University\\ SE-106 91 Stockholm \\Sweden}
\email{\mailto{luger@math.su.se}}

\author[G.\ Teschl]{Gerald Teschl}
\address{Faculty of Mathematics\\ University of Vienna\\
Oskar-Morgenstern-Platz 1\\ 1090 Wien\\ Austria\\ and International
Erwin Schr\"odinger
Institute for Mathematical Physics\\ Boltzmanngasse 9\\ 1090 Wien\\ Austria}
\email{\mailto{Gerald.Teschl@univie.ac.at}}
\urladdr{\url{http://www.mat.univie.ac.at/~gerald/}}

\author[T.\ W\"ohrer]{Tobias W\"ohrer}
\address{Institute for Analysis und Scientific Computing\\Vienna University of Technology\\
Wiedner Hauptstra{\ss}e.\ 8--10/101\\ 1040 Vienna\\ Austria}
\email{\mailto{tobias.woehrer@gmail.com}}

\thanks{{\it Research supported by the Austrian Science Fund (FWF) under Grant No.\ Y330}}
\thanks{Monatsh. Math. {\bf 179}, 603--613 (2016).}

\keywords{Schr\"odinger operators, distributional coefficients, Weyl function, asymptotics.}
\subjclass[2010]{Primary 34B24, 34L05; Secondary  34L40, 46E22.}

%%%%%%%%%%%
\begin{abstract}
We derive an asymptotic expansion for the Weyl function of a one-dimensional
Schr\"odinger operator which generalizes the classical formula by Atkinson.
Moreover, we show that the asymptotic formula can also be interpreted in the sense of
distributions.
\end{abstract}
%%%%%%%%%%%

\maketitle

%%%%%%%%%%%%%%%%%%%%%%%%%%%%%%%%%%%%%%%%
%%%%%%%%%%%%%%%%%%%%%%%%%%%%%%%%%%%%%%%%
\section{Introduction}\label{s1}
%%%%%%%%%%%%%%%%%%%%%%%%%%%%%%%%%%%%%%%%
%%%%%%%%%%%%%%%%%%%%%%%%%%%%%%%%%%%%%%%%

The $m$-function or Weyl--Titchmarsh function introduced by Weyl in \cite{Weyl10} plays a fundamental role in spectral theory for Sturm--Liouville operators. In particular, it is known that in the case of sufficiently "nice" potentials $q$ all information about the spectral properties of self-adjoint realizations of the differential expression
\be
 -\frac{d^2}{dx^2} + q(x),
\ee
acting in $L^2(0,\infty)$, are encoded in this function. In 1952 Marchenko proved (see \cite[Theorem 2.2.1]{Mar52}) that the $m$-function corresponding to the Dirichlet boundary condition at $x=0$ behaves asymptotically at infinity like the corresponding function of the unperturbed operator (corresponding to $q\equiv0$), that is,
\be\label{eq:mar}
m(z) = -{\sqrt{-z}}\,(1+\oo(1)),
\ee
as $z\to \infty$ in any nonreal sector in the open upper complex half-plane $\C_+$ (let us stress that the high-energy behavior of $m$ can be deduced from the asymptotic behavior of the corresponding spectral function, see \cite[Theorem~II.4.3]{LS88}). A simple proof of this formula was found by Levitan in \cite{Lev52} (a short self-contained proof of \eqref{eq:mar} can be found in, e.g., \cite[Lemma~9.19]{Te14}). Since then the high-energy asymptotics $z\to\infty$ of the $m$-function received
enormous attention over the past three decades as can be inferred, for instance from
\cite{At81}, \cite{Be88}, \cite{Be89}, \cite{BM97}, \cite{DL91},
\cite{Ev72}, \cite{EH78}, \cite{EHS83}, \cite{Ha83}--\cite{Ha87},
\cite{Hi69}, \cite{HKS89}, \cite{KK86}, \cite{KK87}, \cite{Ry01}, \cite{Si98}
and the references therein.

Typically there are two directions which are of interest: If one assumes $q$ to be smooth a full asymptotic expansion can
be given. Otherwise, one tries to derive the leading asymptotic under minimal assumptions on $q$. One of the key
improvements in this latter direction is due to Atkinson \cite{At81} who showed
\be\label{eq:at}
m(z) = -\sqrt{-z} - \int_{[0,x_0)} \E^{-2\sqrt{-z}y} q(y) dy + o(z^{-1/2})
\ee
for arbitraty $x_0\in(0,\infty)$.
In particular, if $0$ is a Lebesgue point of $q$ this implies
\be\label{eq:at2}
m(z) = -\sqrt{-z} - \frac{q(0)}{2\sqrt{-z}} + o(z^{-1/2}).
\ee
On the other hand, the case of a locally integrable potential does not cover the case where
$q$ is a single Dirac $\delta$ one of the most popular toy models which can be found in any
text book on quantum mechanics. Even though the case of delta potentials has a long
tradition (see e.g.\ the monograph \cite{aghh}) the case where $q$ is replaced by an
arbitrary measure got significant interest only recently and we refer to \cite{BaRe05},
\cite{EGNT12}--\cite{EGNT12b}, \cite{ET12}, \cite{Pe13}, \cite{SS00} and the literature therein. 

Our question in the present paper is to what extent \eqref{eq:at} remains valid
when $q$ is replaced by a measure. Moreover, we will also show that \eqref{eq:at2}
remains true when interpreted in the sense of distributions.

%%%%%%%%%%%%%%%%%%%%%%%%%%%%%%%%%%%%%%%%
%%%%%%%%%%%%%%%%%%%%%%%%%%%%%%%%%%%%%%%%
\section{Schr\"odinger Operators with Measure-Valued Coefficients} \label{s2}
%%%%%%%%%%%%%%%%%%%%%%%%%%%%%%%%%%%%%%%%
%%%%%%%%%%%%%%%%%%%%%%%%%%%%%%%%%%%%%%%%

Our main object are one-dimensional Schr\"odinger operators in the Hilbert space $L^2(a,b)$, $-\infty< a < b \le \infty$,
associated with the differential expressions
\be
\tau f = \left( - f' + \int f\, d\chi \right)',
\ee
where $\chi$ is a locally finite signed Borel measure on $[a,b)$. In particular, we assume that
$\tau$ is regular at $a$, that is, $a\in\R$ and the total variation of $\chi$ is finite near $a$ (i.e.,
$|\chi|([a,x_0)) <+\infty$ for every $x_0\in(a,b)$).

The maximal domain of this differential expression is given as
\[
\Deftau = \left\{ f\in \ACloc \,|\, \left ( x\mapsto -f'(x) + \int f\, d\chi \right) \in \ACloc \right\},
\]
which leads to a jump condition for $f'(x)$ at every point mass,
\be
f'(x+)-f'(x-) = \chi(\{x\}) f(x).
\ee
We fix $f'(x)$ to be left continuous. At $x=a$ the above condition has to be understood
as the definition of the left limit.

In order to get a self-adjoint operator we look at the corresponding maximal operator
associated with $\tau$ in $L^2(a,b)$ with the domain
\begin{align*}
&\dom\Tmax = \{ f\in \Deftau \, | \, f,\tau f \in L^2(a,b) \}.
\end{align*}
For $f,g\in \dom\Tmax$ we can define the Wronskian as usual
\be
W_x(f,g) = f(x) g'(x) - f'(x) g(x)
\ee
and one can verify the Lagrange identity
\be
\int_{[c,d)} (g \tau f - f\tau g) dx = W_d(f,g) - W_c(f,g)
\ee
where $x, c,d$ include the interval endpoints as one-sided limits.
In particular, the Wronskian is constant for two solutions of $\tau u = z u$.

We say $\tau$ is in the limit-circle (l.c.) case at $b$ if all solutions of $\tau u= z u$ are square integrable near $b$ and we say that $\tau$ is in the limit-point (l.p.) case at $b$ otherwise.

To obtain a self-adjoint operator from $\Tmax$ we will choose appropriate boundary conditions. First of
all we will choose a Dirichlet boundary condition at $a$. Then, if $\tau$ is in the l.p.\ case at $b$, no
further boundary condition is needed and the corresponding operator
\[
\dom{S} = \{ f\in \dom\Tmax \,| \, f(a) = 0 \}
\] 
is a self-adjoint restriction of $\Tmax$. Otherwise, if $\tau$ is in the l.c.\ case at $b$, we need an additional boundary
condition at $b$ in which case every restriction of $\Tmax$ with domain
\[
\dom{S} = \left\{ f\in \dom\Tmax \, | \, f(a) = 0, W_b(f, w^*)=0 \right\},
\]
where $w\in \dom\Tmax$ satisfies $W_b(w,w^*)=0$ and $W(h,w^*)\neq 0$ for some $h\in\dom\Tmax$, is a self-adjoint operator.

We refer to \cite{ET12} for background and general theory.

%%%%%%%%%%%%%%%%%%%%%%%%%%%%%%%%%%%%%%%%
%%%%%%%%%%%%%%%%%%%%%%%%%%%%%%%%%%%%%%%%
\section{Asymptotics for the Weyl function} \label{s3}
%%%%%%%%%%%%%%%%%%%%%%%%%%%%%%%%%%%%%%%%
%%%%%%%%%%%%%%%%%%%%%%%%%%%%%%%%%%%%%%%%

In this section we will assume that the left endpoint $a$ is regular and without loss of
generality we will assume $a=0$. To simplify notation we denote 
\[
\chi(x):=\begin{cases}
\chi([0,x)), & x\in(0,b),\\
0, & x=0.
\end{cases}
\]
In this case we have a basis
of solutions $c(z,x)$, $s(z,x)$ of $\tau u = z u$ determined by the initial conditions
\be
c(z,0)=1,\: c'(z,0)=0, \qquad s(z,0)=0,\: s'(z,0)=1,
\ee
such that $W(c(z),s(z))=1$. Here and in what follows a prime will always denote
a derivative with respect to the spatial coordinate $x$.
They are given as the solutions of the following integral equations
\begin{align}\label{eq:iec}
c(z,x) =& \cosh(\sqrt{-z} x) + \frac{1}{\sqrt{-z}} \int_{[0,x)} \sinh(\sqrt{-z}(x-y)) c(z,y) d\chi(y),\\
s(z,x) =& \frac{1}{\sqrt{-z}} \sinh(\sqrt{-z} x) + \frac{1}{\sqrt{-z}} \int_{[0,x)} \sinh(\sqrt{-z}(x-y)) s(z,y) d\chi(y).
\end{align}
In fact, this can be verified using integration by parts, which also shows
\begin{align}
c'(z,x) =& \sqrt{-z}\sinh(\sqrt{-z} x) + \int_{[0,x)} \cosh(\sqrt{-z}(x-y)) c(z,y) d\chi(y),\\ \label{eq:iesp}
s'(z,x) =& \cosh(\sqrt{-z} x) + \int_{[0,x)} \cosh(\sqrt{-z}(x-y)) s(z,y) d\chi(y).
\end{align}
Here and in what follows $\sqrt{\cdot}$ will always denote the standard branch of the square root with branch
cut along $(-\infty,0)$.

We will need their high-energy asymptotics as $\im(z)\to\infty$.

\begin{lemma}\label{lem:asymcs}
The function $c(z,x)$ and its derivative $c'(z,x)$ can be written as
\begin{align}\nn
c(z,x)=& \cosh(\sqrt{-z}x) + \frac{1}{2 \sqrt{-z}}\sinh(\sqrt{-z}x) \chi(x)\\ \nn
& {} +\frac{\E^{\sqrt{-z}x}}{4\sqrt{-z}} \left (\int_{[0,x)} \E^{-2\sqrt{-z}y} d\chi(y)
- \int_{[0,x)} \E^{-2\sqrt{-z}(x-y)}d\chi(y)\right)\\
& {} - \frac{\E^{\sqrt{-z}x}}{z}E_1(z,x),\\\nn 
c'(z,x) =& \sqrt{-z} \sinh(\sqrt{-z}x) + \frac{1}{2} \cosh(\sqrt{-z}x) \chi(x)\\ \nn
& {} + \frac{\E^{\sqrt{-z}x}}{4} \left( \int_{[0,x)} \E^{-2\sqrt{-z}y} d\chi(y)
+ \int_{[0,x)} \E^{-2\sqrt{-z}(x-y)} d\chi(y) \right)\\
& {} + \frac{\E^{\sqrt{-z}x}}{\sqrt{-z}} E_2(z,x),
\end{align}
with error functions $E_j(z,x)$ satisfying $|E_j(z,x)|\le C|\chi|([0,x))$ and
\be
E_j(z,x)= \frac{1}{8} \int_{(0,x)}\big(\chi(y) + \chi(\{0\})\big) d\chi(y) + \oo(1), \quad j=1,2,
\ee
as $\im(z)\to +\infty$.

Similarly, the function $s(z,x)$ and its derivative $s'(z,x)$ can be written as
\begin{align}\nn
s(z,x) =& \frac{ 1}{\sqrt{-z}}\sinh(\sqrt{-z}x) - \frac{1}{2z} \cosh(\sqrt{-z}x) \chi(x)\\ \nn
& {}+\frac{\E^{\sqrt{-z}x}}{4z}\left(\int_{[0,x)} \E^{-2\sqrt{-z}y} d\chi(y) + \int_{[0,x)} \E^{-2\sqrt{-z}(x-y)} d\chi(y)\right)\\
& {} +  \frac{\E^{\sqrt{-z}x}}{\sqrt{-z}^3}E_3(z,x),\\ \nn
s'(z,x) =& \cosh(\sqrt{-z}x) + \frac{1}{2\sqrt{-z}} \sinh(\sqrt{-z}x) \chi(x)\\ \nn
& {} - \frac{\E^{\sqrt{-z}x}}{4\sqrt{-z}} \left(\int_{[0,x)} \E^{-2\sqrt{-z}y} d\chi(y) - \int_{[0,x)} \E^{-2\sqrt{-z}(x-y)}  d\chi(y) \right)\\
& {} - \frac{\E^{\sqrt{-z}x}}{z}E_4(z,x),
\end{align}
with error functions $E_j(z,x)$ satisfying $|E_j(z,x)|\le C|\chi|([0,x))$ and
\be
E_j(z,x)= \frac{1}{8} \int_{(0,x)}\big(\chi(y) - \chi(\{0\})\big) d\chi(y) + \oo(1), \quad j=3,4,
\ee
as $\im(z)\to +\infty$.
\end{lemma}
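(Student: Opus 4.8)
The plan is to expand each of the four functions into a Neumann series obtained by iterating the Volterra integral equations. Writing $k=\sqrt{-z}$ and setting $c_0(z,x)=\cosh(kx)$ together with
\be
c_n(z,x)=\frac{1}{k}\int_{[0,x)}\sinh(k(x-y))\,c_{n-1}(z,y)\,d\chi(y),\quad n\ge1,
\ee
the integral equation \eqref{eq:iec} is solved by $c(z,x)=\sum_{n\ge0}c_n(z,x)$, and analogously for $s$ with $s_0(z,x)=k^{-1}\sinh(kx)$. For the derivatives I would not iterate separately but feed the already-expanded $c$ and $s$ into the integral equations \eqref{eq:iesp} for $c'$ and $s'$, i.e.\ set $c'_0=k\sinh(kx)$ and $c'_n(z,x)=\int_{[0,x)}\cosh(k(x-y))\,c_{n-1}(z,y)\,d\chi(y)$, and similarly for $s'$.

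The first quantitative step is the uniform bound. Writing $\kappa=\re(k)$, which tends to $+\infty$ as $\im(z)\to+\infty$, and $V(x)=|\chi|([0,x))$, the elementary estimates $|\sinh(k(x-y))|,|\cosh(k(x-y))|\le \E^{\kappa(x-y)}$ for $0\le y\le x$ give, by induction,
\be
|c_n(z,x)|\le \frac{\E^{\kappa x}}{|k|^n}\,\frac{V(x)^n}{n!},
\ee
where the inductive step uses $\int_{[0,x)}V(y)^{n-1}\,d|\chi|(y)\le V(x)^n/n$, valid for the left-continuous non-decreasing $V$ even in the presence of point masses. This proves absolute convergence, yields the analogous bounds for $s_n,c'_n,s'_n$, and shows that the tails $\sum_{n\ge2}$ are $\OO(\E^{\kappa x}|k|^{-2})$, so that the error functions $E_j$ are well defined and satisfy $|E_j|\le C\,V(x)$.

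Next I would compute the terms $n=0,1$ in closed form. Using the product-to-sum identities, $c_1$ evaluates to exactly the three explicit correction terms in the claimed formula for $c$, and likewise $c'_1,s_1,s'_1$ reproduce the explicit terms in the remaining three formulas; thus the remainder is precisely $\sum_{n\ge2}$, which by construction equals $-\E^{kx}E_1/z=\E^{kx}E_1/k^2$ (and the corresponding expressions for $j=2,3,4$). Since the bound above makes every term with $n\ge3$ contribute $\OO(|k|^{-1})$ after the rescaling by $k^2\E^{-kx}$ (resp.\ $k\E^{-kx}$ and $k^3\E^{-kx}$ for the others), the asymptotics of $E_j$ are governed by the single second iterate, e.g.\ $E_1=k^2\E^{-kx}c_2+\oo(1)$.

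It remains to evaluate $\lim k^2\E^{-kx}c_2$, which is the heart of the matter. Expanding $c_2$ by the double product-to-sum reduction turns $\E^{-kx}c_2$ into a double $\chi$-integral of eight exponentials $\E^{-k\alpha}$ with exponents $\alpha\in\{0,2x,2y,2(x-y),2y',2(x-y'),2(y-y'),2(x-y+y')\}$, all of which are $\ge0$ on the domain $0\le y'<y<x$. By dominated convergence (the integrand is bounded by a constant, integrable for the finite measure $|\chi|\times|\chi|$) only two exponents survive the limit $\kappa\to\infty$: the constant $\alpha=0$ contributes $\int_{(0,x)}\chi(y)\,d\chi(y)$, while $\E^{-2ky'}$ survives only at the point mass $y'=0$ and contributes $\chi(\{0\})\chi((0,x))$, yielding $E_1\to\frac{1}{8}\int_{(0,x)}(\chi(y)+\chi(\{0\}))\,d\chi(y)$ (using $\chi(0)=0$ to pass between $[0,x)$ and $(0,x)$). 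The same scheme applies verbatim to the other three functions; the sign of the $\chi(\{0\})$ term flips for $s,s'$ because the relevant surviving exponential enters their product-to-sum expansions with the opposite sign. I expect this last limit --- correctly isolating the surviving exponentials and capturing the point-mass contribution at the origin through the half-open interval convention --- to be the main obstacle, whereas the convergence and bookkeeping of the iteration are routine.
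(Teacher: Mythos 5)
Your proposal is correct and follows essentially the same route as the paper: both iterate the Volterra integral equations twice, identify the $n=0,1$ terms with the explicit part of the expansion, and evaluate the leading behaviour of the remaining double $\chi$-integral by dominated convergence, with the $\chi(\{0\})$ sign flip coming from the innermost $\cosh(ky')$ versus $\sinh(ky')$ factor exactly as you describe. The only difference is bookkeeping: you organize the iteration as a full Neumann series with factorial bounds on each $c_n$ (so the error is the tail $\sum_{n\ge2}$, dominated by $c_2$), whereas the paper first bounds $\tilde c(z,x)=\E^{-\sqrt{-z}x}c(z,x)$ a priori and then bootstraps the integral equation twice --- these are two descriptions of the same remainder.
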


\begin{proof}
Abbreviate $k=\sqrt{-z}$ and note $\re(k)\ge 0$.
First of all, considering the function $\ti{c}(z,x)= \E^{-k x} c(z,x)$ we look at the corresponding integral equation 
\[
\ti{c}(z,x)= \frac{1+\E^{-2kx}}{2} + \int_{[0,x)} \frac{1-\E^{-2k(x-y)}}{2k} \ti{c}(z,y) d\chi(y)
\]
from which it follows that there is a bounded solution satisfying
\[
|\ti{c}(z,x)| \le \exp\big(|k|^{-1} |\chi|([0,x))\big)
\]
by using the usual iteration scheme (cf.\ \cite[Theorem A.2]{ET12}).
Now we use bootstrapping and insert this information into our integral equation. First the integral equation for $c(z,x)$ can be written as
\be \label{eqn:E1}
c(z,x) = \cosh(k x) + \frac{\E^{k x}}{k}\ti{E}_1(z,x),
\ee
with the error term
\[
\ti{E}_1(z,x)=\int_{[0,x)} \frac{1-\E^{-2k (x-y)}}{2} \ti{c}(z,y) d\chi(y)
\]
which is is locally uniformly bounded in $x$ by the above estimate for $\ti{c}(z,x)$ . Reinserting \eqref{eqn:E1} into the integral equation for $c(z,x)$ leads to the desired representation of the solution $c(z,x)$, where the error term
\[
E_1(z,x)= \int_{[0,x)} \frac{1-\E^{-2k (x-y)}}{2} \ti{E}_1(z,y) d\chi(y)
\]
is locally uniformly bounded in $x$. 

To compute the desired estimate for the error term $E_1(z,x)$ we insert \eqref{eqn:E1} into the definition of $\ti{E}_1(z,x)$, which leads to
\[
\ti{E}_1(z,x)=
\begin{cases}
\frac{1}{4}(\chi(x) + \chi(\{0\})) + \OO(\frac{1}{2k}), & x>0,\\
0, & x=0,
\end{cases}
\]
by the dominated convergence theorem, where the estimate is locally uniform in $x$ as $\im(z)\to +\infty$. Now inserting this estimate into the definition of  $E_1(z,x)$ and applying the dominated convergence theorem again, leads to the desired estimate for the error term $E_1(z,x)$.

Similarly, considering $\ti{s}(z,x)= k\E^{-k x} s(z,x)$ we look at the corresponding integral equation 
\[
\ti{s}(z,x)= \frac{1-\E^{-2kx}}{2} + \int_{[0,x)} \frac{1-\E^{-2k(x-y)}}{2k} \ti{s}(z,y) d\chi(y)
\]
and conclude that there is a bounded solution satisfying
\[
|\ti{s}(z,x)| \le \exp\big(|k|^{-1} |\chi|([0,x))\big).
\]
The rest follows as before.
\end{proof}

Next we recall the Weyl function $m(z)$ defined such that
\be\label{eqn:weylfkt}
u(z,x) = c(z,x) + m(z) s(z,x), \qquad z\in\C\setminus\R,
\ee
is square integrable near $b$ and satisfies the boundary condition of our operator at $b$ (if there is one).
Following the original approach of Weyl we recall the Weyl circles with center, radius given by
\be
q(z,x_0)= - \frac{W_{x_0}(c(z,\cdot),s(z,\cdot)^*)}{W_{x_0}(s(z,\cdot),s(z,\cdot)^*)}, \qquad
r(z,x_0)= \frac{1}{|W_{x_0}(s(z,\cdot),s(z,\cdot)^*)|},
\ee
with $x_0\in[0,b)$, respectively. By construction the solutions $c(z,x) + m\, s(z,x)$ with $m$ on the Weyl circle are
precisely the ones which satisfy a real boundary condition at $x_0$:
\be\label{eqn:quotient}
\frac{c'(z,x_0) + m\, s'(z,x_0)}{c(z,x_0) + m\, s(z,x_0)} \in \R\cup\{\infty\}.
\ee
Taking $x_0\nearrow b$ these circles are nested and hence converge to a circle (limit circle case) or to
a point (limit point case). In the first case, the points on the circle correspond to the Weyl functions
corresponding to different self-adjoint realizations and in the second case the point corresponds to
the unique Weyl function of the unique self-adjoint realization.

Moreover, for $\im(z)>0$, those where the quotient in \eqref{eqn:quotient} is in the upper, lower half-plane are those for which $m$ is
in the interior, exterior of the Weyl circle, respectively. Hence, if we find an $m$ in the interior,
the distance between $m$ and $m(z)$ can be at most $2 r(z,x_0)$. This is precisely the idea (due to \cite{At81})
of the following lemma:

\begin{lemma}
For every $x_0\in(0,b)$ we have
\be\label{eq:asmv}
m(z) = -\frac{c(z,x_0)\sqrt{-z}+c'(z,x_0)}{s'(z,x_0)+\sqrt{-z}s(z,x_0)} + \OO(z\E^{-2\sqrt{-z} x_0}),
\ee
as $\im(z)\to+\infty$, where the error depends only on the total variation $|\chi|([0,x_0))$.
Moreover,
\be
\frac{c(z,x)\sqrt{-z}+c'(z,x)}{s'(z,x)+\sqrt{-z}s(z,x)}
= \sqrt{-z} \frac{1+\frac{1}{\sqrt{-z}} \int_{[0,x)} \ti{c}(z,y) d\chi(y)}{1+\frac{1}{\sqrt{-z}} \int_{[0,x)} \ti{s}(z,y) d\chi(y)}
\ee
where $\ti{c}(z,x)= \E^{-\sqrt{-z} x} c(z,x)$ and $\ti{s}(z,x)= \sqrt{-z}\E^{-\sqrt{-z} x} s(z,x)$.
\end{lemma}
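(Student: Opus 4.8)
The plan is to treat the two displayed formulas separately: the quotient identity is a mechanical consequence of the integral equations, while the asymptotic expansion \eqref{eq:asmv} is obtained from the nesting property of the Weyl disks, exactly along the lines sketched just before the lemma. Throughout I abbreviate $k=\sqrt{-z}$ and recall $\re(k)\ge 0$ and, for $\im(z)>0$, $\im(k)<0$.

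First I would establish the quotient identity directly from the integral equations \eqref{eq:iec}, \eqref{eq:iesp} and their companions for $c'$ and $s$. Adding $k$ times the equation for $c$ to the one for $c'$, the free terms combine through $\cosh(kx)+\sinh(kx)=\E^{kx}$ into $k\E^{kx}$, while inside the integral $\cosh(k(x-y))+\sinh(k(x-y))=\E^{k(x-y)}$ factors as $\E^{kx}\E^{-ky}$. Pulling out $\E^{kx}$ and recalling $\ti c(z,y)=\E^{-ky}c(z,y)$ gives
\[
 c'(z,x)+k\,c(z,x) = k\,\E^{kx}\Big(1+\frac{1}{k}\int_{[0,x)}\ti c(z,y)\,d\chi(y)\Big).
\]
The identical manipulation applied to $s$ and $s'$, now with $\ti s(z,y)=k\,\E^{-ky}s(z,y)$, yields
\[
 s'(z,x)+k\,s(z,x) = \E^{kx}\Big(1+\frac{1}{k}\int_{[0,x)}\ti s(z,y)\,d\chi(y)\Big).
\]
Forming the ratio cancels $\E^{kx}$ and leaves exactly the claimed identity with $k=\sqrt{-z}$.

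For the asymptotic \eqref{eq:asmv} I would follow Atkinson's idea. The candidate value
\[
 \hat m := -\frac{c(z,x_0)\sqrt{-z}+c'(z,x_0)}{s'(z,x_0)+\sqrt{-z}\,s(z,x_0)}
\]
is precisely the one for which $u=c+\hat m\,s$ satisfies $u'(z,x_0)=-\sqrt{-z}\,u(z,x_0)$, i.e.\ for which the quotient in \eqref{eqn:quotient} equals $-\sqrt{-z}$. Since $\im(z)>0$ forces $\im(\sqrt{-z})<0$, the value $-\sqrt{-z}$ lies in the open upper half-plane, so $\hat m$ sits in the interior of the Weyl disk at $x_0$. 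As the Weyl function $m(z)$ belongs to the closed disk bounded by every Weyl circle (by the nesting property), both $\hat m$ and $m(z)$ lie in the closed disk of radius $r(z,x_0)$, whence $|m(z)-\hat m|\le 2\,r(z,x_0)$.

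It remains to bound the radius, and this is the step I expect to be the main obstacle. Using $s(z,0)=0$, $s'(z,0)=1$ together with the Lagrange identity for $\tau s(z)=z\,s(z)$ and $\tau s(z)^*=\bar z\,s(z)^*$ gives
\[
 W_{x_0}(s(z,\cdot),s(z,\cdot)^*) = 2\I\,\im(z)\int_{[0,x_0)}|s(z,x)|^2\,dx,
\]
so that $r(z,x_0)=\big(2\,\im(z)\,\|s(z,\cdot)\|_{L^2(0,x_0)}^2\big)^{-1}$ and a lower bound on the $L^2$-norm controls $r$ from above. By Lemma~\ref{lem:asymcs} (or directly from the integral equation) the leading part of $s$ is $\sinh(\sqrt{-z}\,x)/\sqrt{-z}$, the remainder being smaller by a factor $\OO(|\sqrt{-z}|^{-1})$ uniformly on $[0,x_0)$ with a constant governed by $|\chi|([0,x_0))$; hence $\|s(z,\cdot)\|_{L^2(0,x_0)}^2$ is, up to such a factor, of size $\E^{2\re(\sqrt{-z})x_0}$ divided by a fixed power of $\sqrt{-z}$. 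Feeding this back, together with $\im(z)=2\,\re(\sqrt{-z})\,|\im(\sqrt{-z})|$, produces $r(z,x_0)=\OO(z\,\E^{-2\sqrt{-z}x_0})$ with a constant depending only on $|\chi|([0,x_0))$, which is the asserted error. The delicate point is to make this lower bound on $\|s(z,\cdot)\|_{L^2}$ effective uniformly as $\im(z)\to+\infty$, absorbing the perturbation term of the integral equation and verifying that the implied constant depends on the total variation alone.
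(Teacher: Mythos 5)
Your argument is correct and takes essentially the same route as the paper: the quotient identity is obtained by adding $\sqrt{-z}$ times the integral equation for $c$ (resp.\ $s$) to the one for $c'$ (resp.\ $s'$), and the asymptotics follow Atkinson's observation that imposing $u'=-\sqrt{-z}\,u$ at $x_0$ yields a point in the interior of the Weyl disk, so that $m(z)$ differs from it by at most $2r(z,x_0)$. The only (immaterial) difference is that you estimate the radius via $|W_{x_0}(s,s^*)|=2\im(z)\|s(z,\cdot)\|_{L^2(0,x_0)}^2$ and a lower bound on the $L^2$-norm, whereas the paper substitutes the asymptotics of $s$ and $s'$ from Lemma~\ref{lem:asymcs} directly into the Wronskian; both reduce to the same leading behaviour of $s$ and the uniformity issue you flag is handled by exactly that lemma.
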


\begin{proof}
For $\im(z)>0$ it follows that the solution defined via the initial condition
\[
v(z,x_0)=1, \qquad v'(z,x_0)= -\sqrt{-z}
\]
with $x_0\in(0,b)$ corresponds to a point in the interior of the Weyl circle. Indeed we have
\[
\frac{v'(z,0)}{v(z,0)} = \frac{W_0(c(z,\cdot),v(z,\cdot))}{W_0(v(z,\cdot),s(z,\cdot))}
\]
and the constancy of the Wronskian implies
\be\label{eqn:wron}
\frac{v'(z,0)}{v(z,0)} = \frac{W_{x_0}(c(z,\cdot),v(z,\cdot))}{W_{x_0}(v(z,\cdot),s(z,\cdot))}= -\frac{c(z,x_0)\sqrt{-z}+c'(z,x_0)}{s'(z,x_0)+\sqrt{-z}s(z,x_0)}.
\ee
Now an easy computation shows that
\[
\frac{c'(z,x_0) + s'(z,x_0) \frac{v'(z,0)}{v(z,0)}}{c(z,x_0) + s(z,x_0) \frac{v'(z,0)}{v(z,0)}} = -\sqrt{-z} \in \C_+.
\]
Hence the point $\frac{v'(z,0)}{v(z,0)}$ lies in the interior of the Weyl circle by the considerations prior to this lemma. As the same is true for the Weyl function $m(z)$ of our problem, we obtain
\[
m(z) = \frac{v'(z,0)}{v(z,0)} + \OO(r(z,x_0)) = \frac{v'(z,0)}{v(z,0)} + \OO(z\E^{-2\sqrt{-z} x_0})
\]
as $\im(z)\to\infty$, where we have used Lemma~\ref{lem:asymcs} for the second identity.

The last part is a straightforward calculation using \eqref{eq:iec}--\eqref{eq:iesp}.
\end{proof}

Combining this lemma with Lemma~\ref{lem:asymcs} gives our main result:

\begin{theorem}
For every $x_0\in(0,b)$ the Weyl $m$-function has the asymptotic behavior
\be\label{eq:main}
m(z) = -\sqrt{-z} - \int_{[0,x_0)} \E^{-2\sqrt{-z}y} d\chi(y) + \oo(z^{-1/2})
\ee
as $\im(z)\to\infty$. Moreover, the error satisfies an estimate of the type $|\oo(z^{-1/2})| \le C |z|^{-1/2}$,
where the constant depends only on the total variation $|\chi|([0,x_0))$.
\end{theorem}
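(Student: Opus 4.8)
The plan is to insert the expansions from Lemma~\ref{lem:asymcs} into the formula
\[
m(z)=-\frac{\sqrt{-z}\,c(z,x_0)+c'(z,x_0)}{s'(z,x_0)+\sqrt{-z}\,s(z,x_0)}+\OO\big(z\E^{-2\sqrt{-z}x_0}\big)
\]
of the preceding lemma and to keep track of all contributions down to order $z^{-1/2}$. Abbreviating $k=\sqrt{-z}$ (so that $\re(k)\to+\infty$ and $|k|\to\infty$ in the relevant limit), I would first assemble the numerator $k\,c(z,x_0)+c'(z,x_0)$ and the denominator $s'(z,x_0)+k\,s(z,x_0)$ separately. Two structural features do most of the work: the hyperbolic leading parts recombine through $\cosh(kx_0)+\sinh(kx_0)=\E^{kx_0}$, and the ``reflected'' integral $\int_{[0,x_0)}\E^{-2k(x_0-y)}\,d\chi(y)$ enters the expansion of a function and of its derivative with opposite signs, so that it cancels upon forming $k\,c+c'$ (resp.\ $s'+k\,s$), leaving only $A:=\int_{[0,x_0)}\E^{-2ky}\,d\chi(y)$. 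Factoring the common $\E^{kx_0}$ I expect
\[
k\,c(z,x_0)+c'(z,x_0)=\E^{kx_0}\Big(k+\tfrac12\chi(x_0)+\tfrac12 A+\tfrac1k(E_1+E_2)\Big),
\]
\[
s'(z,x_0)+k\,s(z,x_0)=\E^{kx_0}\Big(1+\tfrac1{2k}\chi(x_0)-\tfrac1{2k}A+\tfrac1{k^2}(E_3+E_4)\Big),
\]
so that $\E^{kx_0}$ cancels in the quotient.

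Then I would expand by $\frac{1}{1+\delta}=1-\delta+\OO(\delta^2)$ with $\delta=\OO(k^{-1})$. The decisive cancellation is that the boundary contribution $\tfrac12\chi(x_0)$ from the numerator is annihilated by the $-k\cdot\tfrac1{2k}\chi(x_0)$ produced by the $-\delta$ term, while the two $A$-contributions reinforce; without this cancellation the order-one term $\tfrac12\chi(x_0)$ would survive and destroy the formula. One is left with
\[
\frac{k\,c(z,x_0)+c'(z,x_0)}{s'(z,x_0)+k\,s(z,x_0)}=k+A+R,\qquad R=\OO(k^{-1}),
\]
hence $m(z)=-k-A+R+\OO(z\E^{-2kx_0})$, and the Weyl-circle error is exponentially small and therefore absorbed. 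The uniform bound is now immediate: $|\chi(x_0)|,|A|\le|\chi|([0,x_0))$ (as $\re(k)\ge0$) and $|E_j|\le C|\chi|([0,x_0))$ by Lemma~\ref{lem:asymcs}, so every term of $R$ is bounded by $C|z|^{-1/2}$ with a constant depending only on the total variation $|\chi|([0,x_0))$.

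The step I expect to be the main obstacle is to show that $R$ is genuinely $\oo(z^{-1/2})$ and not merely $\OO(z^{-1/2})$. For this I would write down the full coefficient of $k^{-1}$ in $R$, which is $E_1+E_2-E_3-E_4$ together with the quadratic cross terms generated by the division (these combine to $\tfrac12 A(A-\chi(x_0))$). Using the refined asymptotics of Lemma~\ref{lem:asymcs} one gets
\[
E_1+E_2-E_3-E_4\to\tfrac12\chi(\{0\})\big(\chi(x_0)-\chi(\{0\})\big),
\]
while dominated convergence (permissible since $\re(k)\to\infty$ and $|\chi|([0,x_0))<\infty$, the atom at $0$ being the only surviving contribution) gives $A\to\chi(\{0\})$ and hence $\tfrac12 A(A-\chi(x_0))\to\tfrac12\chi(\{0\})\big(\chi(\{0\})-\chi(x_0)\big)$. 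These two limits are exact opposites, so the coefficient of $k^{-1}$ in $R$ tends to zero, which yields $R=\oo(z^{-1/2})$. Thus the entire difficulty is to carry the division bookkeeping out one order beyond the leading integral term; once the $k^{-1}$-coefficient is exhibited, its vanishing follows at once from $A\to\chi(\{0\})$ and the two error-limits supplied by Lemma~\ref{lem:asymcs}.
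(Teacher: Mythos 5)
Your proposal is correct and follows essentially the same route as the paper: insert the expansions of Lemma~\ref{lem:asymcs} into the quotient of the preceding lemma, observe that the reflected integrals cancel and the $\tfrac12\chi(x_0)$ terms annihilate each other in the division, arrive at $m(z)=-k-I_1-\tfrac1k(E_1+E_2-E_3-E_4)+\tfrac1{2k}(\chi(x_0)I_1-I_1^2)+\oo(1/k)$, and then use $I_1\to\chi(\{0\})$ together with the stated limits of the $E_j$ to see that the $k^{-1}$ coefficient vanishes. In fact you supply more of the ``long but straightforward computation'' than the paper does, and your bookkeeping (including the quadratic terms from the division and the uniform bounds giving the total-variation dependence of the constant) checks out.
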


\begin{proof}
By inserting Lemma~\ref{lem:asymcs} into the identity \eqref{eqn:wron} a long but straightforward
computation shows that
\[
m(z)= -k - I_1 - \frac{1}{k} \left(E_1 + E_2 - E_3 - E_4\right) + \frac{1}{2k} (\chi(x_0) I_1-I_1^2) + \oo(\frac{1}{k})
\]
as $\im(z)\to+\infty$, where we abbreviated $k=\sqrt{-z}$ and $I_1(z)=\int_{[0,x_0)} \E^{-2ky}d\chi(y)$.
Inserting the estimates for the error terms $E_j(z)=E_j(z,x_0)$ of Lemma~\ref{lem:asymcs} as well as the estimate
\[
I_1(z) =  \chi(\{0\})+ \oo(1)
\]
as $\im(z)\to+\infty$, finally proves the theorem.
\end{proof}

\begin{remark}
(i). We want to emphasize that in contradistinction to \cite{At81} our approach is more direct and avoids the
use of Riccati equations for the Weyl function. In addition to being simpler this approach also retains
a good control over the error with respect to the total variation of $\chi$. This will turn out crucial
for our following application which states that \eqref{eq:at2} continues to hold in the sense of
distributions. A similar result (for Neumann boundary conditions) can be found in Lemma~5.1 of \cite{BaRe05}
with a weaker error term and again without the above mentioned control.

\noindent
(ii). It is also possible to allow for more general potentials. In fact, one could consider potentials in $H^{-1}_{loc}$,
however, in this case Lemma~\ref{lem:asymcs} is expected to break down since $\chi(x)$ will be in $L^2$ and
hence there are no point values. We refer to Theorem~B.2 in \cite{EGNT12b}, where the weaker result
$m(z) = -\sqrt{-z} +o(\sqrt{-z})$ for in fact a slightly lager class than $H^{-1}_{loc}$ is shown.

\noindent
(iii). For an arbitrary left endpoint $a$ equation \eqref{eq:main} reads
\[
m(z) = -\sqrt{-z} - \int_{[a,x_0)} \E^{-2\sqrt{-z}(y-a)} d\chi(y) + \oo(z^{-1/2}).
\]
(iv). Of course one can iterate this procedure to get further terms in the above expansion.
For example using one more step one obtains:
\begin{align*}
m(z) =& -k - \int_{[0,x_0)} \E^{-2k y} d\chi(y)\\
& {} - \frac{1}{2k} \int_{[0,x_0)} \left((1-\E^{-2k (x_0-y)})  \int_{[0,y)} \E^{-2k r}d\chi(r) \right) d\chi(y)\\
& {} + \frac{1}{2k} \int_{[0,x_0)} (1-\E^{-2k y}) d\chi(y) \int_{[0,x_0)} \E^{-2ky}d\chi(y) + \OO(k^{-2}).
\end{align*}
\end{remark}

\begin{theorem}
Denote by $m(z,t)$ the Weyl function associated with our operator restricted to the interval $(t,b)$
with a Dirichlet boundary condition at $t\in[a,b)$ and keeping the boundary condition at $b$ (if any) fixed.
Then for any test function $\phi \in C^\infty_c(a,b)$ we have
\be
\int_a^b m(z,t) \phi(t) dt = -\sqrt{-z} \int_a^b \phi(t) dt - \frac{1}{2\sqrt{-z}} \int_a^b \phi(s) d\chi(s) + \oo(z^{-1/2}).
\ee
\end{theorem}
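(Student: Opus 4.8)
The plan is to integrate the pointwise high-energy expansion of the preceding theorem against $\phi$ and to promote the pointwise error $\oo(z^{-1/2})$ to an integrated one; the whole argument hinges on the uniform control of the error by the total variation emphasised in Remark~(iii). Fix $\mathrm{supp}\,\phi\subset[\alpha,\beta]\subset(a,b)$ and choose $x_0\in(\beta,b)$. For every $t\in[\alpha,\beta]$ the differential expression $\tau$ restricted to $(t,b)$ is regular at the left endpoint $t$, so applying \eqref{eq:main} in the translated form of part (i) of the preceding remark gives, with $k:=\sqrt{-z}$,
\[
m(z,t) = -k - \int_{[t,x_0)} \E^{-2k(y-t)}\,d\chi(y) + R(z,t),
\]
where $|R(z,t)| \le C\,|k|^{-1}$ with $C$ depending only on $|\chi|([t,x_0)) \le |\chi|([\alpha,x_0))$, hence uniformly in $t\in[\alpha,\beta]$.

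Multiplying by $\phi(t)$ and integrating over $(a,b)$, the leading term reproduces $-k\int_a^b\phi(t)\,dt$ at once. For the error term I would argue by dominated convergence: for each fixed $t$ the preceding theorem gives $k\,R(z,t)\to 0$ as $\im(z)\to\infty$, while the uniform bound gives $|k\,R(z,t)|\le C$ and $\phi$ is integrable with compact support; therefore $k\int_a^b \phi(t) R(z,t)\,dt \to 0$, i.e.\ this contribution is $\oo(z^{-1/2})$. I expect this upgrade from the pointwise $\oo$ to the integrated $\oo$ to be the main obstacle, and it is precisely here that a mere $\OO(z^{-1/2})$ bound would be insufficient: the uniform estimate in terms of $|\chi|$ is exactly what supplies the dominating function.

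It remains to treat the middle term $-\int_a^b \phi(t)\int_{[t,x_0)}\E^{-2k(y-t)}\,d\chi(y)\,dt$. Since $|\chi|([a,x_0))<\infty$ and the integrand is bounded, Fubini's theorem applies and lets me exchange the order of integration; substituting $u=y-t$ yields
\[
-\int_{[a,x_0)} \Big(\int_0^{y-a}\phi(y-u)\,\E^{-2ku}\,du\Big)\,d\chi(y).
\]
Because $\phi$ vanishes in a neighbourhood of $a$, the upper limit $y-a$ may be replaced by $+\infty$ without any error, and one integration by parts gives, uniformly in $y$,
\[
\int_0^{\infty}\phi(y-u)\,\E^{-2ku}\,du = \frac{\phi(y)}{2k} - \frac{1}{2k}\int_0^{\infty}\E^{-2ku}\phi'(y-u)\,du = \frac{\phi(y)}{2k} + \OO(|k|^{-2}),
\]
the remainder being bounded by $\|\phi'\|_\infty/(4|k|\,\re(k))$. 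Integrating this identity against $d\chi$ and using $\mathrm{supp}\,\phi\subset(a,x_0)$, the middle term becomes $-\frac{1}{2k}\int_a^b\phi(s)\,d\chi(s)+\OO(|k|^{-2})$, and $\OO(|k|^{-2})=\oo(z^{-1/2})$. Collecting the three contributions proves the asserted expansion.
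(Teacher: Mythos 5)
Your argument is correct and is essentially the paper's own proof: both integrate the pointwise expansion of the preceding theorem against $\phi$, use the uniform, total-variation-controlled bound on the error to justify dominated convergence, and then handle the middle term by Fubini followed by a single integration by parts on the inner integral. The only cosmetic difference is that you work with a fixed $x_0$ to the right of $\mathrm{supp}\,\phi$, whereas the paper uses the moving window $[t,t+\eps)$; both choices are legitimate since the expansion holds for every admissible right endpoint.
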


\begin{proof}
All we have to do is multiply \eqref{eq:main} with $\phi$ and integrate with respect to $t$.
By our bound on the error term we can integrate the error term using dominated convergence
and the rest follows by Fubini:
\begin{align*}
\int_a^b m(z,t) \phi(t) dt %& = -\sqrt{-z} \Phi_0 - \int_a^b \phi(x) \int_{[t,t+\eps)} \E^{-2\sqrt{-z}(s-t)} d\chi(s)dt +\oo(z^{-1/2})\\
&=  -\sqrt{-z} \Phi_0 - \int_{\R^2} \phi(t) \id_{[t,t+\eps)}(s) \E^{-2\sqrt{-z}(s-t)} d\chi(s) dt +\oo(z^{-1/2})\\
&=  -\sqrt{-z} \Phi_0 - \int_{\R^2} \phi(t) \id_{(s-\eps,s]}(t) \E^{2\sqrt{-z}(t-s)} dt\, d\chi(s)  +\oo(z^{-1/2})\\
&= -\sqrt{-z} \Phi_0 - \frac{1}{2\sqrt{-z}} \int_a^b \phi(s) d\chi(s) + \oo(z^{-1/2}),
\end{align*}
where we have abbreviated $\Phi_0 = \int_a^b \phi(t) dt$ and $\id_\Omega$ denotes the indicator function
of a set $\Omega$. Moreover, in the last step we have used
\[
\int_{s-\eps}^s \phi(t) \E^{2\sqrt{-z}(t-s)} dt = \frac{1}{2\sqrt{-z}} \phi(s) + O(z^{-1}),
\]
which follows from a simple integration by parts.
\end{proof}

Finally, we look at the example

\begin{example}
Denote by $\delta_0$ a single Dirac delta measure located at $x=0$ and set
\be
\chi = \alpha \delta_0, \qquad \alpha\in\R.
\ee
In this case the solution $u(z,x)$ from \eqref{eqn:weylfkt} is given as $u(z,x)=\E^{-\sqrt{-z}x}$ and thus $u(z,0)=1$ and
$u'(z,0)=u'(z,0-)= -\sqrt{-z} -\alpha$ implying
\be
m(z,0)= -\sqrt{-z} -\alpha
\ee
in agreement with \eqref{eq:main}.
%\begin{align}
%u_-(z,x) &= \begin{cases}
%\E^{\sqrt{-z}x}, & x\le 0,\\
%\big(1+ \frac{\alpha}{2\sqrt{-z}}\big)\E^{\sqrt{-z}x} -\frac{\alpha}{2\sqrt{-z}} \E^{-\sqrt{-z}x}, & x \ge 0,
%\end{cases}\\
%u'_-(z,x) &= \begin{cases}
%\sqrt{-z} \E^{\sqrt{-z}x}, & x\le 0,\\
%\big(\sqrt{-z}+ \frac{\alpha}{2}\big)\E^{\sqrt{-z}x} + \frac{\alpha}{2} \E^{-\sqrt{-z}x}, & x > 0,
%\end{cases}\\
%u_+(z,x) &= \begin{cases}
%\big(1+ \frac{\alpha}{2\sqrt{-z}}\big)\E^{-\sqrt{-z}x} -\frac{\alpha}{2\sqrt{-z}} \E^{\sqrt{-z}x}, & x \le 0,\\
%\E^{-\sqrt{-z}x}, & x\ge 0,
%\end{cases}\\
%u'_+(z,x) &= \begin{cases}
%-\big(\sqrt{-z}+ \frac{\alpha}{2}\big)\E^{-\sqrt{-z}x} -\frac{\alpha}{2} \E^{\sqrt{-z}x}, & x \le 0,\\
%-\sqrt{-z}\E^{-\sqrt{-z}x}, & x> 0.
%\end{cases}
%\end{align}
%Consequently
%\begin{align}
%m_-(z,x) &= \frac{u_-'(z,x)}{u_-(z,x)} = \begin{cases}
%\sqrt{-z}, & x\le 0,\\
%\sqrt{-z} \frac{1+ \frac{\alpha}{2\sqrt{-z}}\big(1+\E^{-2\sqrt{-z}x}\big)}{1+ \frac{\alpha}{2\sqrt{-z}}\big( 1 -\E^{-2\sqrt{-z}x} \big)}, & x > 0,
%\end{cases}\\
%m_+(z,x) &= \frac{u_+'(z,x)}{u_+(z,x)} = \begin{cases}
%-\sqrt{-z} \frac{1+ \frac{\alpha}{2\sqrt{-z}}\big(1+\E^{2\sqrt{-z}x}\big)}{1+ \frac{\alpha}{2\sqrt{-z}}\big( 1 -\E^{2\sqrt{-z}x} \big)}, & x \le 0,\\
%-\sqrt{-z}, & x< 0.
%\end{cases}
%\end{align}
%and
%\be
%g(z,x) = \frac{-1}{m_+(z,x)-m_-(z,x)} = \frac{1}{-2\sqrt{-z}} \left( 1 - \frac{\alpha}{2\sqrt{-z} + \alpha} \E^{2\sqrt{-z}|x|}\right).
%\ee
%In particular,
%\be
%m_-(z,0)= \sqrt{-z}, \quad
%m_+(z,0)= -\sqrt{-z} -\alpha, \quad
%g(z,0)= -\frac{1}{2\sqrt{-z}+\alpha}.
%\ee
\end{example}

\bigskip

%%%%%%%%%%%
\noindent 
{\bf Acknowledgments.} We are indebted to Jonathan Eckhardt, Fritz Gesztesy, and Helge Holden for discussions on this subject.
%%%%%%%%%%%

%%%%%%%%%%%

\end{document}